\newcommand{\nc}{\newcommand}
\nc{\one}{\mbox{\bf 1}}
\nc{\invtensor}{\underset{\leftarrow}{\otimes}}
\nc{\const}{\operatorname{const}}
\nc{\ad}{\operatorname{ad}}
\nc{\tr}{\operatorname{tr}}
\nc{\tp}{\operatorname{top}}
\nc{\rank}{\operatorname{rank}}
\nc{\corank}{\operatorname{corank}}
\nc{\codim}{\operatorname{codim}}
\nc{\sdim}{\operatorname{sdim}}
\nc{\mult}{\operatorname{mult}}
\nc{\spn}{\operatorname{span}}
\nc{\Sym}{\operatorname{Sym}}
\nc{\sym}{\operatorname{sym}}
\nc{\id}{\operatorname{id}}
\nc{\Id}{\operatorname{Id}}
\nc{\Ree}{\operatorname{Re}}
\nc{\htt}{\operatorname{ht}}
\nc{\Ker}{\operatorname{Ker}}
\nc{\rker}{\operatorname{rKer}}
\nc{\im}{\operatorname{Im}}
\nc{\osp}{\mathfrak{osp}}
\nc{\sgn}{\operatorname{sgn}}
\nc{\F}{\operatorname{F}}
\nc{\Mod}{\operatorname{Mod}}
\nc{\Mat}{\operatorname{Mat}}
\nc{\Soc}{\operatorname{Soc}}
\nc{\Inj}{\operatorname{Inj}}
\nc{\Hom}{\operatorname{Hom}}
\nc{\End}{\operatorname{End}}
\nc{\supp}{\operatorname{supp}}
\nc{\Card}{\operatorname{Card}}
\nc{\Ann}{\operatorname{Ann}}
\nc{\Ind}{\operatorname{Ind}}
\nc{\Coind}{\operatorname{Coind}}
\nc{\wt}{\operatorname{wt}}
\nc{\ch}{\operatorname{ch}}
\nc{\Stab}{\operatorname{Stab}}
\nc{\Sch}{{\mathcal S}\mbox{\em ch}}
\nc{\Irr}{\operatorname{Irr}}
\nc{\Spec}{\operatorname{Spec}}
\nc{\Prim}{\operatorname{Prim}}
\nc{\Aut}{\operatorname{Aut}}
\nc{\Ext}{\operatorname{Ext}}
\nc{\Fract}{\operatorname{Fract}}
\nc{\gr}{\operatorname{gr}}
\nc{\deff}{\operatorname{def}}
\nc{\HC}{\operatorname{HC}}
\nc{\red}{\operatorname{red}}
\nc{\wdchi}{\widetilde{\chi}}
\nc{\wdH}{\widetilde{H}}
\nc{\wdN}{\widetilde{N}}
\nc{\wdM}{\widetilde{M}}
\nc{\wdO}{\widetilde{O}}
\nc{\wdR}{\widetilde{R}}
\nc{\wdS}{\widetilde{S}}
\nc{\wdV}{\widetilde{V}}
\nc{\wdC}{\widetilde{C}}
\nc{\Obj}{\operatorname{Obj}}
\nc{\Dglie}{\operatorname{{\mathcal D}glie}}
\nc{\Fin}{\operatorname{{\mathcal F}in}}
\nc{\Adm}{\operatorname{\mathcal{A}dm}}
\nc{\Sg}{{\cS(\fg)}}
\nc{\Shg}{{\cS(\fhg)}}
\nc{\Ug}{{\cU(\fg)}}
\nc{\Uhg}{{\cU(\fhg)}}
\nc{\Sh}{{\cS(\fh)}}
\nc{\Uh}{{\cU(\fh)}}
\nc{\Uhh}{{\cU(\fhh)}}
\nc{\Zg}{{{\mathcal{Z}}(\fg)}}
\nc{\Vir}{{\mathcal{V}ir}}
\nc{\tZg}{{\widetilde{\mathcal Z}({\mathfrak g})}}
\nc{\Zk}{{\mathcal Z}({\mathfrak k})}
\nc{\Up}{{\mathcal U}({\mathfrak p})}
\nc{\Ah}{{\mathcal A}({\mathfrak h})}
\nc{\Ag}{{\mathcal A}({\mathfrak g})}
\nc{\Ap}{{\mathcal A}({\mathfrak p})}
\nc{\Zp}{{\mathcal Z}({\mathfrak p})}
\nc{\cZ}{\mathcal Z}
\nc{\cS}{\mathcal S}
\nc{\cT}{\mathcal{T}}
\nc{\cY}{\mathcal Y}
\nc{\cA}{\mathcal A}
\nc{\cU}{\mathcal U}
\nc{\cH}{\mathcal H}
\nc{\cM}{\mathcal M}
\nc{\cL}{\mathcal L}
\nc{\cF}{\mathcal F}
\nc{\fg}{\mathfrak g}
\nc{\fo}{\mathfrak o}
\nc{\CO}{\mathcal O}
\nc{\Cl}{\mathcal {C}\ell}
\nc{\cR}{\mathcal{R}}
\nc{\bM}{\mathbf{M}}
\nc{\bL}{\mathbf{L}}
\nc{\bN}{\mathbf{N}}
\nc{\zq}{\mathpzc q}
\nc{\fl}{\mathfrak l}
\nc{\fn}{\mathfrak n}
\nc{\fm}{\mathfrak m}
\nc{\fp}{\mathfrak p}
\nc{\fh}{\mathfrak h}
\nc{\ft}{\mathfrak t}
\nc{\fk}{\mathfrak k}
\nc{\fb}{\mathfrak b}
\nc{\fs}{\mathfrak s}
\nc{\fB}{\mathfrak B}
\nc{\vareps}{\varepsilon}
\nc{\varesp}{\varepsilon}
\nc{\veps}{\varepsilon}
\nc{\fsl}{\mathfrak{sl}}
\nc{\fpsl}{\mathfrak{psl}}
\nc{\fgl}{\mathfrak{gl}}
\nc{\fso}{\mathfrak{so}}
\nc{\fpq}{\mathfrak{pq}}
\nc{\fq}{\mathfrak q}
\nc{\fsq}{\mathfrak{sq}}
\nc{\fpsq}{\mathfrak{psq}}
\nc{\fhg}{\hat{\fg}}
\nc{\fhn}{\hat{\fn}}
\nc{\fhh}{\hat{\fh}}
\nc{\fhb}{\hat{\fb}}
\nc{\hrho}{\hat{\rho}}
\nc{\hsl}{\hat{\fsl}}
\nc{\fpo}{\mathfrak{po}}
\nc{\dirlim}{\underset{\rightarrow}{\lim}\,}
\nc{\nen}{\newenvironment}
\nc{\ol}{\overline}
\nc{\ul}{\underline}
\nc{\ra}{\rightarrow}
\nc{\lra}{\longrightarrow}
\nc{\Lra}{\Longrightarrow}
\nc{\Lla}{\Longleftarrow}
\nc{\Llra}{\Longleftrightarrow}
\nc{\thla}{\twoheadleftarrow}
\nc{\hra}{\hookrightarrow}
\nc{\iso}{\overset{\sim}{\lra}}
\nc{\ssubset}{\underset{\not=}{\subset}}
\nc{\vac}{|0\rangle}
\nc{\Thm}[1]{Theorem~\ref{#1}}
\nc{\Prop}[1]{Proposition~\ref{#1}}
\nc{\Lem}[1]{Lemma~\ref{#1}}
\nc{\Cor}[1]{Corollary~\ref{#1}}
\nc{\Conj}[1]{Conjecture~\ref{#1}}
\nc{\Claim}[1]{Claim~\ref{#1}}
\nc{\Defn}[1]{Definition~\ref{#1}}
\nc{\Exa}[1]{Example~\ref{#1}}
\nc{\Rem}[1]{Remark~\ref{#1}}
\nc{\Note}[1]{Note~\ref{#1}}
\nc{\Quest}[1]{Question~\ref{#1}}
\nc{\Hyp}[1]{Hypoth\`ese~\ref{#1}}
\begin{document}
\setcounter{section}{-1}

\title[Weyl denominator identity]{Weyl denominator identity
for the affine Lie superalgebra $\fgl(2|2)\hat{}$}
\author[Maria Gorelik]{Maria Gorelik}

\address{Dept. of Mathematics, The Weizmann Institute of Science,
Rehovot 76100, Israel}
\email{maria.gorelik@weizmann.ac.il}
\thanks{Supported in part by ISF Grant No. 1142/07}

\begin{abstract}
We prove the Weyl denominator identity for the affine Lie superalgebra
$\fgl(2|2)\hat{}$ conjectured by V.~Kac and M.~Wakimoto in~\cite{KW}.
As it was pointed out in~\cite{KW}, 
this gives a new proof of the Jacobi identity for 
the number of presentations of a given integer 
as a sum of $8$ squares.
\end{abstract}

\maketitle

\section{Introduction}
The denominator identities for Lie superalgebras were formulated and
partially proven in
the paper of V.~Kac and M.~Wakimoto~\cite{KW}. 
In the same paper it was shown how various classical identities
in number theory as the number of representation of a given integer 
as a sum of $d$ squares can be obtained, for some $d$, by evaluation
of certain denominator identities.
The following cases are considered in the paper~\cite{KW}:

(a) basic Lie superalgebras, i.e. the finite-dimensional simple Lie
superalgebras, which have a reductive even part and admit an 
even non-degenerate invariant bilinear form;

(b) the affinization of basic Lie superalgebras with 
non-zero dual Coxeter number;

(c) the (twisted) affinization of a strange Lie superalgebras $Q(n)$;

(d) the affinization of $\fgl(2|2)$ (this is the smallest
 basic Lie superalgebras with zero dual Coxeter number).

Some of the cases (a), (b) are proven in~\cite{KW}; the proof is based on
combinatorics of root systems and a certain result
from representation theory. The rest of (a) was proven in~\cite{G1}
using only combinatorics of root systems. The rest of (b) was proven 
in~\cite{G} using (a) and the existence of Casimir operator.
The case (c) was proven in~\cite{Z} analytically. 
In the present  paper we prove (d), i.e. the identity
for the affine Lie superalgebra  ${\fgl}(2|2)\hat{}$
conjectured in~\cite{KW}, 7.1. The proof uses the existence
of Casimir operator and an idea of~\cite{Z}.

In order to write down the identity, we introduce the 
following infinite products after~\cite{DK}: 
for a parameter $q$ and a formal variable $x$ we set
$$(1+x)_q^{\infty}:=\prod_{n=0}^{\infty}(1+q^nx),\ \text{ and } \ 
(1-x)_q^{\infty}:=\prod_{n=0}^{\infty}(1-q^nx).$$
These infinite products converge
for any $x\in\mathbb{C}$ if the parameter $q$ is a real number $0<q<1$.
In particular, they are well-defined for $0<x=q<1$ and 
 $(1\pm q)_q^{\infty}:=\prod_{n=1}^{\infty}(1\pm q^n)$.

Take the formal variables $x,y_1,y_2$.
The denominator identity for ${\fgl}(2|2)\hat{}$ 
can be written in the following form
\begin{equation}\label{denom}\begin{array}{c}
\displaystyle\frac{(1-x)_q^{\infty}(1-qx^{-1})_q^{\infty}
(1-xy_1y_2)_q^{\infty}(1-q(xy_1y_2)^{-1})_q^{\infty}
\bigl((1-q)_q^{\infty}\bigr)^4}
{\displaystyle\prod_{i=1}^2(1+y_i)_q^{\infty}
(1+qy_i^{-1})_q^{\infty}(1+xy_i)_q^{\infty}
(1+qx^{-1}y_i^{-1})_q^{\infty}}=\\=
\displaystyle\frac{((1-q)_q^{\infty})^2}{(1-qy_1^{-1}y_2)_q^{\infty}
(1-qy_1y_2^{-1})_q^{\infty}}\cdot
\displaystyle\sum_{n=-\infty}^{\infty}(\frac{q^n}
{(1+q^ny_1)(1+q^ny_2)}-\frac{q^nx}
{(1+q^nxy_1)(1+q^nxy_2)}).
\end{array}
\end{equation}
Expanding the factor $\frac{((1-q)_q^{\infty})^2}{(1-qy_1^{-1}y_2)_q^{\infty}
(1-qy_1y_2^{-1})_q^{\infty}}$ 
in the region $q<|\frac{y_1}{y_2}|<q^{-1}$ we obtain (see~\Lem{factor})
$$\begin{array}{l}
\frac{((1-q)_q^{\infty})^2}{(1-qy_1^{-1}y_2)_q^{\infty}
(1-qy_1y_2^{-1})_q^{\infty}}=1+\sum_{n=1}^{\infty} f_n(\frac{y_1}{y_2}),\\
\text{ where }
f_n(y):=\bigl(y^n+y^{-n}-y^{n-1}-y^{1-n}\bigr)
\sum_{j=0}^{\infty}(-1)^jq^{(j+1)(j+2n)/2}\end{array}$$
and this gives the identity conjectured by V.~Kac and M.~Wakimoto.

The left-hand side of the identity represents the Weyl denominator $\hat{R}$
for the affine Lie superalgebra ${\fgl}(2|2)\hat{}$; the second
factor in the right-hand side is the analogue of  the right-hand side
of the denominator identity for affine Lie superalgebras
with non-zero dual Coxeter number. Note that  the denominator identity for
the affine Lie superalgebra ${\fsl}(2|2)\hat{}$
can be obtained from the denominator identity for ${\fgl}(2|2)\hat{}$
by taking $y_1=y_2$; as a result,
 the denominator identity for ${\fsl}(2|2)\hat{}$
is almost similar to the denominator identity for affine Lie superalgebras
with non-zero dual Coxeter number with one extra-factor
$(1-q)_q^{\infty}$ in the left-hand side (since the dimension of Cartan
subalgebra for $\fsl(2|2)$ is less by one than the dimension of Cartan
subalgebra for $\fgl(2|2)$).

As it is shown in~\cite{KW}, the evaluation of
this identity gives the following Jacobi identity~\cite{J}:
\begin{equation}\label{jacobi}
\square(q)^8=1+16\sum_{j,k=1}^{\infty} (-1)^{(j+1)k}k^3q^{jk},
\end{equation}
where $\square(q)=\sum_{j\in\mathbb{Z}}q^{j^2}$ and thus
the coefficient of $q^m$ in the power series expansion of $\square(q)^d$
is the number  of representation of a given integer 
as a sum of $d$ squares (taking into the account the order of summands).

In Section 1 we introduce notation. In Section 2  we prove
the identity~(\ref{denom}). In Section 3 we recall how to deduce
the Jacobi identity from the identity~(\ref{denom}).

\section{Notation}
\subsection{Root system}
Consider $V:=\mathbb{R}^5$ endowed by a bilinear form
and an orthogonal basis
$\vareps_1,\vareps_2,\delta_1,\delta_2,\delta$ such that
$(\vareps_i,\vareps_i)=1=-(\delta_i,\delta_i)$ for $i=1,2$ and 
$(\delta,\delta)=0$. Set
$$\beta_1:=\delta_1-\vareps_1,\ \alpha:=\vareps_1-\vareps_2,\ 
\beta_2:=\vareps_2-\delta_2,\ \gamma:=\beta_1+\alpha+\beta_2=
\delta_1-\delta_2.$$

The root system of $\fgl(2|2)$ is $\Delta_0=\{\pm\alpha,
\pm\gamma\}$, $\Delta_1=\{\pm\beta_i; \pm(\alpha+\beta_i)\}_{i=1,2}$. 
The affine root system is $\hat{\Delta}_i=\cup_{s\in\mathbb{Z}} (\Delta_i
+s\delta)$, $i=0,1$.

We consider the following sets of simple roots for  $\fgl(2|2)$
and  $\fgl(2|2)\hat{}$ respectively:
$$\Pi:=\{\beta_1,\alpha,\beta_2\},\ \text{ and }\  
\hat{\Pi}=\{\beta_1,\alpha,\beta_2,\delta-\gamma\}.$$
One has
$$\Delta_+=\{\alpha,\gamma;\beta_i,\alpha+\beta_i\}_{i=1,2},\ \ 
\hat{\Delta}_+=\Delta_+\cup\cup_{s=1}^{\infty} (\Delta
+s\delta),\ \ \ \hat{\rho}=\rho=-\frac{\beta_1+\beta_2}{2}.$$

Set
$$Q^+:=\sum_{\mu\in {\Pi}}\mathbb{Z}_{\geq 0}\mu,\ \ \ 
\hat{Q}^+=\sum_{\mu\in\hat{\Pi}}\mathbb{Z}_{\geq 0}\mu.$$

\subsubsection{}
For $\nu\in\Delta_0$  let
$s_{\nu}\in\Aut (V)$ be the reflection with respect to $\nu$, i.e.
$s_{\nu}(\lambda)=\lambda-\frac{(\lambda,\nu)}{(\nu,\nu)}\nu$.
The Weyl group $W$ of $\Delta_0$
takes form $W=W_{\alpha}\times W_{\gamma}$, where 
$W_{\alpha}$ (resp., $W_{\gamma}$) is generated by the reflection
$s_{\alpha}$ (resp., $s_{\gamma}$). 

For $\nu\in V$ introduce $t_{\nu}\in\Aut (V)$ 
by the formula
$$t_{\mu}(\lambda)=\lambda-(\lambda,\mu)\delta.$$
Then $t_{\mu}t_{\nu}=t_{\mu+\nu}$. For $\nu\in\Delta_0$ we denote by
$T_{\nu}$ the infinite cyclic group generated by $t_{\nu}$ and by
$\hat{W}_{\nu}$ the group generated by $s_{\nu}$ and $t_{\nu}$.
The Weyl group of ${\fgl}(2|2)\hat{}$ is 
$\hat{W}=\hat{W}_{\alpha}\times\hat{W}_{\gamma}$. 
Notice that $\delta$ and $\beta_1-\beta_2$ lie in the kernel of 
the bilinear form so these vectors are $\hat{W}$-stable.

For a subgroup $G$ of the Weyl
group we introduce the following operator:
$$\cF_G:=\sum_{w\in G} \sgn w\cdot w.$$

\subsection{Algebra $\cR$}
We are going to use notation of~\cite{G}, 1.4, which we recall below.

\subsubsection{}\label{R6}
Consider the space $\hat{\fh}^*=
V\oplus\mathbb{R}\Lambda_0$ and extend our bilinear form
by $(\Lambda_0,\delta)=1, (\Lambda_0,\Lambda_0)=(\Lambda_0,\vareps_i)
=(\Lambda_0,\delta_i)=0$ for $i=1,2$. 
The Weyl group $\hat{W}$ acts on $\hat{\fh}^*$ as follows: the reflections
act by the same formulas and the action of $t_{\mu}$
extends by the standard formula
$$t_{\mu}(\lambda)=\lambda+(\lambda,\delta)\mu-((\lambda,\mu)+
\frac{(\mu,\mu)}{2}(\lambda,\delta))\delta,\ \ \mu\in V,
\lambda\in\hat{\fh}^*$$

Call a {\em $\hat{Q}^+$-cone} a set of the form $(\lambda-\hat{Q}^+)$, 
where $\lambda\in\hat{\fh}^*$.

\subsubsection{}
For a formal sum of the form $Y:=\sum_{\nu\in\hat{\fh}^*} b_{\nu} e^{\nu},\ 
b_{\nu}\in\mathbb{Q}$ define the {\em support} of $Y$ by
$\supp(Y):=\{\nu\in\hat{\fh}^*|\ b_{\nu}\not=0\}$.
Let $\cR$ be a vector space over $\mathbb{Q}$,
spanned by the sums of the form
$\sum_{\nu\in \hat{Q}^+} b_{\nu} e^{\lambda-\nu}$, where $\lambda\in\hat{\fh}^*,\ 
b_{\nu}\in\mathbb{Q}$. In other words, $\cR$ consists of
the formal sums $Y=\sum_{\nu\in\hat{\fh}^*} b_{\nu}e^{\nu}$ with the support 
lying in a finite union of $\hat{Q}^+$-cones.

Clearly, $\cR$ has a structure of commutative algebra over 
$\mathbb{Q}$. If $Y\in \cR$ is such that $YY'=1$ for some $Y'\in\cR$,
we write $Y^{-1}:=Y'$.

\subsubsection{Action of the Weyl group}
For $w\in \hat{W}$ set $w(\sum_{\nu\in\hat{\fh}^*} b_{\nu}e^{\nu}):=
\sum_{\nu\in\hat{\fh}^*} b_{\nu}e^{w\nu}$. One has $wY\in\cR$ iff
$w(\supp Y)$ is a subset of a finite union of $\hat{Q}^+$-cones.

Let $W'$ be a subgroup of $\hat{W}$. 
Let $\cR_{W'}:=\{Y\in\cR|\ wY\in \cR \text{ for
each }w\in W'\}$. Clearly, $\cR_{W'}$ is a subalgebra of $\cR$.
 
\subsubsection{Infinite products}\label{infprod}
An infinite product of the form $Y=\prod_{\nu\in X}
(1+a_{\nu}e^{-\nu})^{r(\nu)}$, where $a_{\nu}\in \mathbb{Q},\
\ r(\nu)\in\mathbb{Z}_{\geq 0}$ and $X\subset \hat{\Delta}$ is such that 
the set $X\setminus\hat{\Delta}_+$ is finite, can be naturally viewed 
as an element of $\cR$; clearly, this element does not depend
on the order of factors. Let $\cY$ be the set of such infinite products.
For any $w\in \hat{W}$ the infinite product
$$wY:=\prod_{\nu\in X}(1+a_{\nu}e^{-w\nu})^{r(\nu)},$$
is again an infinite product of the above form, since, 
as one easily sees (\cite{G}, Lem. 1.2.8),  
the set $w\hat{\Delta}_+\setminus \hat{\Delta}_+$  is finite.
Hence $\cY$ is a $\hat{W}$-invariant multiplicative subset of
$\cR_{\hat{W}}$.
 
The elements of $\cY$ are invertible in $\cR$: using 
the geometric series we can expand  $Y^{-1}$ 
(for example, $(1-e^{\alpha})^{-1}=
-e^{-\alpha}(1-e^{-\alpha})^{-1}=-\sum_{i=1}^{\infty} e^{-i\alpha}$).

\subsubsection{The subalgebra $\cR'$}\label{cR'}
Denote by $\cR'$ the localization of $\cR_{\hat{W}}$ by $\cY$. By above,
$\cR'$ is a subalgebra of $\cR$. Observe that $\cR'\not\subset \cR_{\hat{W}}$:
for example, $(1-e^{-\alpha})^{-1}\in\cR'$, but 
$(1-e^{-\alpha})^{-1}=\sum_{j=0}^{\infty} e^{-j\alpha}\not\in \cR_{\hat{W}}$.
We extend the action of $\hat{W}$
from $\cR_{\hat{W}}$ to $\cR'$ by setting $w(Y^{-1}Y'):=(wY)^{-1}(wY')$
for $y\in\cY,\ Y'\in\cR_{\hat{W}}$.

An infinite product of the form $Y=\prod_{\nu\in X}
(1+a_{\nu}e^{-\nu})^{r(\nu)}$, where $a_{\nu}, X$ are as above
and $r(\nu)\in\mathbb{Z}$ lies in $\cR'$, and
$wY=\prod_{\nu\in X} (1+a_{\nu}e^{-w\nu})^{r(\nu)}$.
One has
$$\supp(Y)\subset\lambda'-\hat{Q}^+,\ \text{ where }
\lambda':=-\sum_{\nu\in X\setminus\hat{\Delta}_+: a_{\nu}\not=0} 
r_{\nu}\nu.$$

\begin{rem}{remsupp}
Set $q:=e^{-\delta},x:=e^{-\alpha},y_i:=e^{-\beta_i}$ and write
elements of $\cR'$ as power series in these variables. Since
$\{e^{-\nu},\nu\in\hat{\Pi}\}=\{x,y_1,y_2,q(xy_1y_2)^{-1}\}$,
the support of $Y\in\cR'$ correspond to the expansion
of $Y$ in the region $|q|<|xy_1y_2|; |x|,|y_1|,|y_2|<1$.
\end{rem}

\subsubsection{}\label{compex}
Let $W'$ be a subgroup of $\hat{W}$. 
For $Y\in\cR'$ we say that {\em $Y$ is $W'$-invariant
(resp., $W'$-anti-invariant)} if $wY=Y$
(resp., $wY=\sgn(w)Y$) for each $w\in W'$.

Let $Y=\sum a_{\mu} e^{\mu}\in\cR_{W'}$ be $W'$-anti-invariant. 
Then $a_{w\mu}=(-1)^{\sgn(w)}a_{\mu}$
for each $\mu$ and $w\in W'$. In particular,  
$W'\supp(Y)=\supp(Y)$, and, moreover, for each  
$\mu\in\supp(Y)$ one has $\Stab_{W'}\mu\subset\{w\in W'|\ \sgn(w)=1\}$.
The condition $Y\in \cR_{W'}$ is essential: for example, for
$W'=\{\id,s_{\alpha}\}$, the expressions $Y:=e^{\alpha}-e^{-\alpha}$,
$Y^{-1}=e^{-\alpha}(1-e^{-2\alpha})^{-1}$ are  $W'$-anti-invariant,
but $\supp(Y^{-1})=-\alpha,-3\alpha,\ldots$ is not 
$s_{\alpha}$-invariant.

Take $Y=\sum a_{\mu} e^{\mu}\in\cR_{W'}$.
The sum $\cF_{W'}(Y)=\sum_{w\in W'}\!\sgn(w) wY$ is an element of $\cR$
if for each $\mu$ the sum $\sum_{w\in W'}\sgn(w) a_{w\mu}$ is finite (i.e.,
$W'\mu\cap\supp(Y)$ is finite). In this case
$\cF_{W'}(Y)\in\cR$ and, writing $\cF_{W'}(Y)=\sum b_{\mu}e^{\mu}$, 
we obtain $b_{\mu}=\sum_{w\in W'}\sgn(w) a_{w\mu}$
so $b_{\mu}=\sgn(w)b_{w\mu}$
for each $w\in W'$. We conclude that 
$$Y\in\cR_{W'}\ \&\ 
\cF_{W'}(Y)\in\cR\ \Longrightarrow\ \left\{\begin{array}{l}
\cF_{W'}(Y)\in \cR_{W'};\\  \cF_{W'}(Y)
\text{ is $W'$-anti-invariant};\\ 
\supp (\cF_{W'}(Y))\text{ is  $W'$-stable}.
\end{array}\right.$$

Let us call a vector $\lambda\in\hat{\fh}^*$ {\em $W'$-regular} if $\Stab_{W'} 
\lambda=\{\id\}$.
Say that the orbit $W'\lambda$ is  $W'$-regular if $\lambda$ is $W'$-regular
(so the orbit consists of  $W'$-regular points). 
If $W'$ is an affine Weyl group, then for any $\lambda\in\hat{\fh}^*$
the stabilizer $\Stab_{W'} \lambda$ is either trivial or 
contains a reflection.
Thus for $W'=\hat{W}_{\alpha},\ \hat{W}_{\gamma}$ one has 
$$Y\in\cR_{W'}\ \&\ 
\cF_{W'}(Y)\in\cR\ \Longrightarrow\ \supp (\cF_{W'}(Y))\ \text{ is a union
of $W'$-regular orbits}.$$

\subsubsection{}\begin{rem}{remR}
For $Y\in\cR'$ the sum $\cF_{W'}(Y)$ is not always 
$W'$-anti-invariant: for example, for $W'=\{\id,s_{\alpha}\}$
 one has $\cF_{W'}((1-e^{-\alpha})^{-1})=
(1-e^{-\alpha})^{-1}-(1-e^{\alpha})^{-1}=1+2e^{-\alpha}+2e^{-2\alpha}+\ldots$
which is not $W'$-anti-invariant.
\end{rem}

\subsection{Another form of denominator identity}\label{anform}
Introduce  the following elements of $\cR$:
$$\begin{array}{lll}
R_0:=\prod_{\nu\in {\Delta}_{0,+}} 
(1-e^{-\nu}), &
R_1:=\prod_{\nu\in {\Delta}_{1,+}}
(1+e^{-\nu}), & R:=\frac{R_0}{R_1},\\
\hat{R}_0:=\prod_{\nu\in\hat{\Delta}_{0,+}} 
(1-e^{-\nu}), &
\hat{R}_1:=\prod_{\nu\in\hat{\Delta}_{1,+}}
(1+e^{-\nu}), & \hat{R}:=\frac{\hat{R}_0}{\hat{R}_1}.
\end{array}
$$
The products $Re^{\rho}$ and $\hat{R}e^{\rho}$ are $\hat{W}$-anti-invariant 
elements of $\cR'$ (see, for instance,~\cite{G}, Lem. 1.5.1).

\subsubsection{}\begin{lem}{factor}
In the region $q<|y|<q^{-1}$ one has
$$\frac{((1-q)_q^{\infty})^2}{(1-qy)_q^{\infty}
(1-qy^{-1})_q^{\infty}}=\sum_{n=1}^{\infty}
\bigl(y^n+y^{-n}-y^{n-1}-y^{1-n}\bigr)
\sum_{j=0}^{\infty}(-1)^jq^{(j+1)(j+2n)/2}$$
and this expression lies in $\cR$ for $y=e^{\beta_2-\beta_1}$.
\end{lem}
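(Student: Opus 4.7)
The plan is to reduce the LHS to a single theta-like sum via a classical partial-fraction identity, then expand that sum as a Laurent series in $y$ in the prescribed annulus and match the resulting coefficients against the RHS. The main input is the Kronecker theta function identity
$$\frac{((1-q)_q^{\infty})^2}{(1-qy)_q^{\infty}(1-qy^{-1})_q^{\infty}}=(1-y)\sum_{n\in\mathbb{Z}}\frac{(-1)^nq^{n(n+1)/2}}{1-yq^n},$$
which I would establish by noting that both sides are meromorphic in $y\in\mathbb{C}^*$ with simple poles exactly at $\{y=q^m:m\in\mathbb{Z}\setminus\{0\}\}$, and that both satisfy the functional equation $H(qy)=-yH(y)$. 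For the right-hand side, a shift of the summation index $n\mapsto n-1$ reduces this to $\sum_{n\in\mathbb{Z}}(-1)^nq^{n(n-1)/2}=0$, which follows from the involution $n\leftrightarrow 1-n$. A residue comparison at $y=q$ then pins down the identity.

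In the region $q<|y|<q^{-1}$, I expand each summand geometrically in the appropriate direction: for $n\geq 1$, $(1-yq^n)^{-1}=\sum_{k\geq 0}y^kq^{nk}$; for $n=0$ the outer factor $(1-y)$ cancels the pole and contributes the constant $1$; for $n=-m$ with $m\geq 1$, $(1-yq^{-m})^{-1}=-\sum_{l\geq 1}q^{ml}y^{-l}$. After distributing $(1-y)$ and using the reindexing $j=m-1$ in
$$h_n(q):=\sum_{j\geq 0}(-1)^jq^{(j+1)(j+2n)/2}=-\sum_{m\geq 1}(-1)^mq^{m(m+2n-1)/2},$$
the two resulting double inner sums are identified with $-h_{k+1}$ and $-h_l$. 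Collecting positive and negative $y$-powers yields
$$1+(y-1)\sum_{n\geq 1}h_n(q)(y^{n-1}-y^{-n})=1+\sum_{n\geq 1}\bigl(y^n+y^{-n}-y^{n-1}-y^{1-n}\bigr)h_n(q),$$
which matches the claim (modulo an evidently missing leading $1$ in the displayed statement, necessary for consistency at $y=1$, where the LHS equals $1$ while the bare sum vanishes).

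For $\cR$-membership with $q=e^{-\delta}$ and $y=e^{\beta_2-\beta_1}$, the minimal $q$-power in $h_n(q)$ is $q^n$, so the exponents arising in the $n$-th summand have the form $\pm k(\beta_2-\beta_1)-N\delta$ with $|k|\leq n$ and $N\geq n$. Since $\delta=\beta_1+\alpha+\beta_2+(\delta-\gamma)\in\hat{Q}^+$, a short calculation gives $n\beta_1-n\beta_2+n\delta=2n\beta_1+n\alpha+n(\delta-\gamma)\in\hat{Q}^+$, so every such weight lies in $-\hat{Q}^+$, and the entire sum has support in the single $\hat{Q}^+$-cone based at $0$. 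The main obstacle is the Kronecker identity, which is the only analytically nontrivial input; once it is in hand the expansion, reindexing, and support check are routine.
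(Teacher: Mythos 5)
Your proposal is essentially correct, but it takes a genuinely different route from the paper at the one nontrivial point. The paper does not prove the theta-type input from scratch: it specializes the already established affine denominator identity for $\fsl(2|1)\hat{}$ (a non-zero dual Coxeter number case, from \cite{KW}, \cite{G}) at $e^{\alpha'}=-1$, $e^{-\beta'_1}=-\xi$, then substitutes $z^2=q$, $\xi^2=y$, arriving at precisely your Kronecker identity in the form $\frac{((1-q)_q^{\infty})^2}{(1-qy)_q^{\infty}(1-qy^{-1})_q^{\infty}(1-y)}=\sum_{n\in\mathbb{Z}}\frac{(-1)^nq^{n(n+1)/2}}{1-q^ny}$; you instead prove that identity directly by complex analysis. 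From there the two arguments coincide: geometric expansion of each partial fraction in $q<|y|<q^{-1}$, reindexing to obtain $1+\sum_n f_n(y)$, and the support check for $y=e^{\beta_2-\beta_1}$ (your computation $n\delta+n\beta_1-n\beta_2=2n\beta_1+n\alpha+n(\delta-\gamma)\in\hat{Q}^+$ is exactly the criterion~(\ref{suppy1y2})). You are also right that the displayed lemma is missing the leading $1$: the paper's own proof, and every later use of the lemma, has $1+\sum f_n$. What your route buys is self-containedness (no reliance on the case (b) identities); what the paper's route buys is that the analytic input comes for free from a known denominator identity.

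Two points in your sketch need tightening. First, as literally written neither side of your Kronecker identity satisfies $H(qy)=-yH(y)$: the product side $G(y)$ satisfies $G(qy)=-y\frac{1-qy}{1-y}G(y)$, and so does $(1-y)F(y)$; the clean equation holds for $G(y)/(1-y)$ and for the bare sum $F(y)=\sum_n\frac{(-1)^nq^{n(n+1)/2}}{1-yq^n}$, so the comparison should be made in that normalization, where the common simple poles are the full lattice $q^{\mathbb{Z}}$ including $y=1$. Second, ``a residue comparison at $y=q$ then pins down the identity'' conceals the step that actually closes the argument: after matching residues at one pole and propagating along the orbit $q^{\mathbb{Z}}$ via the functional equation, the difference $D$ is holomorphic on $\mathbb{C}^{*}$ and still satisfies $D(qy)=-yD(y)$; writing $D=\sum_k a_ky^k$ gives $a_kq^k=-a_{k-1}$, hence $|a_k|=|a_0|\,q^{-k(k+1)/2}$, which is incompatible with convergence on all of $\mathbb{C}^{*}$ unless $a_0=0$, so $D\equiv 0$. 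This rigidity is true but must be stated, and the direction of the functional equation matters: the opposite equation $H(qy)=-y^{-1}H(y)$ is satisfied by the nonzero holomorphic function $\sum_k(-1)^kq^{k(k-1)/2}y^k$, so pole-and-residue matching alone would not suffice there. With these two repairs your argument is complete and equivalent in outcome to the paper's.
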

\begin{proof}
Consider the root system $\fsl(2|1)$ 
with the odd simple roots $\beta'_1,\beta_2'$ and
the even positive root $\alpha'=\beta_1'+\beta_2'$. Note that 
the corresponding element $\rho'$ is equal to zero.
Consider the corresponding affine root system, let $\delta'$ be
the minimal imaginary root and $\hat{W'}$ be its 
Weyl group. The affine denominator identity for $\fsl(2|1)$ 
written for $z:=e^{-\delta}$ takes form
$$\begin{array}{l}
\displaystyle\frac{(1-e^{-\alpha'})_z^{\infty}(1-ze^{\alpha'})_z^{\infty}
\bigl((1-z)_z^{\infty}\bigr)^2}
{\displaystyle\prod_{i=1}^2(1+e^{-\beta_i})_z^{\infty}
(1+ze^{\beta_i})_z^{\infty}}
=\sum_{n=-\infty}^{\infty}z^{n^2}\bigl(\frac{e^{n\alpha}}{1+z^ne^{-\beta_1}}
-\frac{e^{-n\alpha}}{1+z^ne^{\beta_2}}\bigr).
\end{array}$$
Both sides are well-defined for real $0<z<1$ and $\beta'_i$ such that
$e^{\beta'_i}\not=z^n$ for $n\in\mathbb{Z}$.  Taking $e^{\alpha'}=-1$
and $e^{-\beta'_1}:=-\xi$ we obtain
$e^{-\beta'_2}=e^{-\alpha}e^{\beta'_1}=\xi^{-1}$ and the evaluation gives
$$
\displaystyle\frac{2\bigl((1+z)_z^{\infty}(1-z)_z^{\infty}\bigr)^2}
{\displaystyle (1-\xi)_z^{\infty}(1+\xi^{-1})_z^{\infty}
(1-z\xi^{-1})_z^{\infty}(1+z\xi)_z^{\infty}}
=\sum_{n=-\infty}^{\infty}(-1)^nz^{n^2}\bigl(\frac{1}{1-z^n\xi}
-\frac{1}{1+z^n\xi}\bigr).$$
For $z^2=q, \xi^2=y$ we get
$$\displaystyle\frac{\bigl((1-q)_q^{\infty}\bigr)^2}
{\displaystyle (1-qy)_q^{\infty}
(1-qy^{-1})_q^{\infty}(1-y)}=2
\sum_{m=-\infty}^{\infty}(-1)^mq^{\frac{m^2+m}{2}}\frac{1}{1-q^my}.$$

For $m>0$ one has
$\frac{1}{1-q^my}=\sum_{k=0}^{\infty}q^{mk}y^k$ and
$\frac{1}{1-q^{-m}y}=-\sum_{k=1}^{\infty}q^{mk}y^{-k}$
so
$$\begin{array}{ll}
\displaystyle\frac{\bigl((1-q)_q^{\infty}\bigr)^2}
{\displaystyle (1-qy)_q^{\infty}
(1-qy^{-1})_q^{\infty}}&=1+(1-y)
\sum_{m=1}^{\infty}(-1)^m\sum_{k=0}^{\infty}(q^{\frac{m^2+m+2mk}{2}}y^k-
q^{\frac{m^2-m+2mk}{2}}y^{-k})\\
&=
1+\sum_{m=0}^{\infty}(-1)^m\sum_{k=1}^{\infty}(q^{\frac{(m+1)(m+2k)}{2}}(y^k
-y^{k-1}+y^{-k}-y^{1-k})
\end{array}$$
as required. One readily sees that the right-hand side of the above expression
lies in $\cR$ for $y=e^{\beta_2-\beta_1}$.
\end{proof}

\subsubsection{}
Set $x:=e^{-\alpha}, y_i:=e^{-\beta_i}$ 
for $i=1,2$ and $q:=e^{-\delta}$. Under this substitution,
the left-hand side of~(\ref{denom}) becomes $\hat{R}$ 
and, using~\Lem{factor}, we rewrite~(\ref{denom}) in the following form
\begin{equation}\label{denom1}
\hat{R}e^{\rho}=(1+\sum f_n)e^{-\rho}\cF_{\hat{W}_{\alpha}} 
\bigl(\frac{e^{\rho}}{(1+e^{-\beta_1})(1+e^{-\beta_2})}\bigr).
\end{equation}
Denote by $LHS$ (resp., $RHS$) the left-hand (resp., right-hand) side
of the identity~(\ref{denom1}).

The denominator identity for $\fgl(2|2)$ takes the form
\begin{equation}\label{Rrho}
\cF_{{W}_{\alpha}} 
\bigl(\frac{e^{\rho}}{(1+e^{-\beta_1})(1+e^{-\beta_2})}\bigr)=
Re^{\rho}=\cF_{W_{\gamma}}
\bigl(\frac{e^{\rho}}{(1+e^{-\beta_1})(1+e^{-\beta_2})}\bigr).
\end{equation}
so~(\ref{denom1}) can be rewritten as
$$\hat{R}e^{\rho}=(1+\sum f_n)\cF_{T_{\alpha}}(Re^{\rho}).$$

 In the sequel we need the following lemma.

\subsubsection{}
\begin{lem}{lemalphagamma}
If $\cF_{T_{\alpha}}(Re^{\rho})$ is well-defined (as an element of $\cR$), then
$$\cF_{T_{\alpha}}(Re^{\rho})=\cF_{T_{\gamma}}(Re^{\rho}).$$
\end{lem}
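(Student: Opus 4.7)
Setting $Z := Re^{\rho}$ and $Y := \frac{e^{\rho}}{(1+e^{-\beta_1})(1+e^{-\beta_2})}$, I would start from equation~(\ref{Rrho}), which gives $Z = Y - s_{\alpha}(Y)$. The strategy is to prove the equality via a term-by-term cancellation after expanding $t_{n\alpha}(Z) - t_{n\gamma}(Z)$.

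The crucial observation is that the support of $Y$ lies in $\{\rho - k_1\beta_1 - k_2\beta_2 \mid k_1, k_2 \in \mathbb{Z}_{\geq 0}\}$, and for any weight $\mu$ of this form one computes $(\mu, \alpha) = 1 + k_1 + k_2 = (\mu, \gamma)$, using $(\rho, \alpha) = (\rho, \gamma) = 1$ and $(\beta_i, \alpha) = (\beta_i, \gamma) = -1$. Since $t_{n\nu}$ acts on each monomial $e^{\mu}$ with $\mu \in V$ by shifting to $e^{\mu - n(\mu, \nu)\delta}$, this forces $t_{n\alpha}(Y) = t_{n\gamma}(Y)$ as formal sums for every $n \in \mathbb{Z}$.

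Next I would use the commutation relations $t_{n\alpha}s_{\alpha} = s_{\alpha}t_{-n\alpha}$ (the dihedral structure of $\hat{W}_{\alpha}$) and $s_{\alpha}t_{n\gamma} = t_{n\gamma}s_{\alpha}$ (which holds because $(\alpha, \gamma) = 0$). Expanding,
\begin{align*}
t_{n\alpha}(Z) &= t_{n\alpha}(Y) - s_{\alpha}t_{-n\alpha}(Y) = t_{n\gamma}(Y) - s_{\alpha}t_{-n\gamma}(Y),\\
t_{n\gamma}(Z) &= t_{n\gamma}(Y) - s_{\alpha}t_{n\gamma}(Y),
\end{align*}
so subtracting yields $t_{n\alpha}(Z) - t_{n\gamma}(Z) = s_{\alpha}\bigl(t_{n\gamma}(Y) - t_{-n\gamma}(Y)\bigr)$. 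Summing over $n \in \mathbb{Z}$ and pairing $n$ with $-n$ would make every pair cancel, giving $\cF_{T_{\alpha}}(Z) = \cF_{T_{\gamma}}(Z)$.

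The main obstacle is bookkeeping rather than conceptual: the sum $\sum_n t_{n\gamma}(Y)$ is not well-defined in $\cR$, so one cannot split the difference into two individually divergent pieces. The argument must therefore be made coefficient by coefficient, noting that for each weight $\mu$ at most one value of $n$ contributes to the coefficient of $e^{\mu}$ in $t_{n\gamma}(Y)$ (the one determined by divisibility of the $\delta$-coefficient of $\mu$ by $1 + k_1 + k_2$), so that the $(n, -n)$ pair cancellation in $\sum_n(t_{n\gamma}(Y) - t_{-n\gamma}(Y))$ is unambiguous weight by weight. The assumed well-definedness of $\cF_{T_{\alpha}}(Z)$ then transfers to $\cF_{T_{\gamma}}(Z)$ through this coefficient-wise equality.
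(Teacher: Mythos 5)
Your proposal is correct and follows essentially the same route as the paper: the key point in both is that $\gamma-\alpha$ pairs to zero with $\rho,\beta_1,\beta_2$, so $t_{n\alpha}$ and $t_{n\gamma}$ act identically on $\frac{e^{\rho}}{(1+e^{-\beta_1})(1+e^{-\beta_2})}$, and the rest is the finite denominator identity~(\ref{Rrho}) together with commutation relations in $\hat{W}$ and the coefficient-wise finiteness of the translation sums. The paper packages this by commuting $\cF_{T_{\alpha}}$ past $\cF_{W_{\gamma}}$ (using the $\gamma$-form of~(\ref{Rrho})) instead of your $n\mapsto -n$ reindexing with the $\alpha$-form and the relation $t_{n\alpha}s_{\alpha}=s_{\alpha}t_{-n\alpha}$, but the content is the same.
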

\begin{proof}
Note that $(\gamma-\alpha,\rho)=(\gamma-\alpha,\beta_i)=0$ 
for $i=1,2$ so $\frac{e^{\rho}}{(1-e^{-\beta_1})(1-e^{-\beta_2})}$
is invariant with respect to the action of $t_{\gamma-\alpha}$.
Therefore 
$$\cF_{{T}_{\alpha}} 
\bigl(\frac{e^{\rho}}{(1+e^{-\beta_1})(1+e^{-\beta_2})}\bigr)=\cF_{T_{\gamma}}
\bigl(\frac{e^{\rho}}{(1+e^{-\beta_1})(1+e^{-\beta_2})}\bigr).$$
Using the formula~(\ref{Rrho}), we obtain
$$\begin{array}{l}
\cF_{T_{\alpha}}(Re^{\rho})=
\cF_{{T}_{\alpha}} \circ \cF_{{W}_{\gamma}} 
\bigl(\frac{e^{\rho}}{(1+e^{-\beta_1})(1+e^{-\beta_2})}\bigr)=
=\cF_{{W}_{\gamma}}  \circ\cF_{{T}_{\alpha}}
\bigl(\frac{e^{\rho}}{(1+e^{-\beta_1})(1+e^{-\beta_2})}\bigr)\\
=\cF_{{W}_{\gamma}}  \circ\cF_{{T}_{\gamma}}
\bigl(\frac{e^{\rho}}{(1+e^{-\beta_1})(1+e^{-\beta_2})}\bigr)=
= \cF_{{T}_{\gamma}}\circ\cF_{{W}_{\gamma}} 
\bigl(\frac{e^{\rho}}{(1+e^{-\beta_1})(1+e^{-\beta_2})}\bigr)=
\cF_{T_{\gamma}}(Re^{\rho}),
\end{array}$$
as required.
\end{proof}

As a corollary,~(\ref{denom1}) can be rewritten as
$\hat{R}e^{\rho}=(1+\sum f_n)\cF_{T_{\gamma}}(Re^{\rho})$.

\section{Proof of the denominator identity}
\subsection{} 
\label{sectsupport}
By~\ref{infprod}, $LHS$ is an invertible element of $\cR'$. 
In this subsection we
show that $RHS$ is a well-defined element of $\cR$.

For $w\in\hat{W}_{\alpha}$ set
$$S_w:=\supp \bigl(w\bigl(\frac{e^{\rho}}{(1+e^{-\beta_1})
(1+e^{-\beta_2})}\bigr)\bigr).$$
One has
$$t_{n\alpha}\bigl(\frac{e^{\rho}}{(1+e^{-\beta_1})(1+e^{-\beta_2})}\bigr)=
\frac{e^{\rho}q^n}{(1+q^ne^{-\beta_1})(1+q^ne^{-\beta_2})}.$$

Then for $n>0$ one has
\begin{equation}
\label{Sw}
\begin{array}{l}
S_{\id}=\{\rho-k_1\beta_1-
k_2\beta_2\},\ \ \ S_{s_{\alpha}}= \{\rho-(1+k_1+k_2)\alpha-
k_1\beta_1-k_2\beta_2\},\\
S_{t_{n\alpha}}= \{\rho-n(1+k_1+k_2)\delta-k_1\beta_1-
k_2\beta_2\},\\ S_{t_{-n\alpha}}=\{\rho-n(1+k_1+k_2)\delta+(k_1+1)\beta_1+
(k_2+1)\beta_2\},\\
S_{t_{n\alpha}s_{\alpha}}=\{\rho-n(1+k_1+k_2)\delta-
(1+k_1+k_2)\alpha-k_1\beta_1-k_2\beta_2\},\\
S_{t_{-n\alpha}s_{\alpha}}=\{\rho-n(1+k_1+k_2)\delta+
(1+k_1+k_2)\alpha+k_1\beta_1+k_2\beta_2\},
\end{array}
\end{equation}
where $k_1,k_2\geq 0$. Observe that the above sets are pairwise
disjoint so the sum $\cF_{\hat{W}_{\alpha}}
\bigl(\frac{e^{\rho}}{(1+e^{-\beta_1})(1+e^{-\beta_2})}\bigr)$
is well-defined and its support lies in $\rho-\hat{Q}^+$.
Clearly, the sum $1+\sum f_n$ is well-defined.

One readily sees that 

\begin{equation}
\label{suppy1y2}
n\delta+k\beta_1-k\beta_2\in\hat{Q}^+\ \ \Longleftrightarrow\ \ 
|k|\leq n
\end{equation}

Thus the support of $1+\sum f_n$ lies in 
$\{0\}\cup \{-m\delta+k\beta_1-k\beta_2|\ m>0\}\cap -\hat{Q}^+$
(in particular, $(1+\sum f_n)\in\cR$). Hence $RHS$
 is a well-defined element of $\cR$.

\subsection{}\label{Y1Y2}
\begin{lem}{ai}
The expansion of $\frac{RHS}{LHS}$ in the 
region $|q|<|xy_1y_2|, |x|, |y_1|, |y_2|<1$ is of the form 
$1+\sum_{n=1}^{\infty}
\sum_{j=-n}^{n} a_{n,j} q^n (\frac{y_1}{y_2})^j$, where
 $a_{n,j}\in\mathbb{Z}$.
\end{lem}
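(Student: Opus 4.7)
Set $Q:=RHS/LHS$ and view it as an element of $\cR'$; the expansion region in the statement matches the support condition $-\hat{Q}^+$ via~\Rem{remsupp}. The plan is to verify: (a) $Q\in\cR$ with $\supp(Q)\subset -\hat{Q}^+$ and constant term $1$; (b) $Q$ is $\hat{W}$-invariant; (c) its coefficients are integers. Granting (a) and (b), a direct computation of $-\hat{Q}^+\cap\alpha^{\perp}\cap\gamma^{\perp}$ pins down the support as $\{-n\delta+j(\beta_2-\beta_1):n\ge 0,\ |j|\le n\}$, which translates into the claimed form $1+\sum a_{n,j}q^n(y_1/y_2)^j$.

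Item (a) is immediate from the previous subsection: $LHS=\hat{R}e^\rho\in\cY$ has leading term $e^\rho$ and support in $\rho-\hat{Q}^+$, so its inverse in $\cR'$ has leading term $e^{-\rho}$ and support in $-\rho-\hat{Q}^+$; combined with the support analysis of $RHS$ in~\ref{sectsupport}, this gives $\supp(Q)\subset -\hat{Q}^+$ with constant coefficient $1$. Integrality in (c) is routine: the expansion of $LHS^{-1}$ via geometric series has $\pm 1$ coefficients, every term of $\cF_{\hat{W}_\alpha}(\ldots)$ has integer coefficients, and the same holds for $1+\sum f_n$ by~\Lem{factor}; the cone supports make each coefficient of $Q$ a finite integer sum.

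The main work is (b). The $\hat{W}_\alpha$-invariance of $Q$ is immediate: $LHS$ is $\hat{W}$-anti-invariant, $\cF_{\hat{W}_\alpha}(\ldots)$ is $\hat{W}_\alpha$-anti-invariant by construction, and $1+\sum f_n$ depends only on $e^{\beta_2-\beta_1}$, which is $\hat{W}_\alpha$-fixed (use $(\beta_2-\beta_1,\alpha)=0$ and $s_\alpha(\beta_i)=\beta_i+\alpha$). The $\hat{W}_\gamma$-invariance is the main obstacle, since it is not visible from the definition of $RHS$. I would apply~\Lem{lemalphagamma} to rewrite
$$\cF_{\hat{W}_\alpha}\Bigl(\frac{e^\rho}{(1+e^{-\beta_1})(1+e^{-\beta_2})}\Bigr)=\cF_{T_\alpha}(Re^\rho)=\cF_{T_\gamma}(Re^\rho),$$
and observe that the last expression is $\hat{W}_\gamma$-anti-invariant: $Re^\rho$ is $W_\gamma$-anti-invariant, $\cF_{T_\gamma}$ is $W_\gamma$-equivariant (since $s_\gamma t_{n\gamma}=t_{-n\gamma}s_\gamma$), and $T_\gamma$ acts trivially on $\cF_{T_\gamma}(\cdot)$. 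Simultaneously $1+\sum f_n$ is $\hat{W}_\gamma$-invariant: $(\beta_2-\beta_1,\gamma)=0$ handles $T_\gamma$, while $s_\gamma(\beta_i)=-(\alpha+\beta_{3-i})$ yields $s_\gamma(\beta_2-\beta_1)=\beta_2-\beta_1$. Hence $RHS$ is $\hat{W}_\gamma$-anti-invariant, completing (b).

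Finally, with (a) and (b) in hand, any $\lambda\in\supp(Q)$ has its full $\hat{W}$-orbit inside $-\hat{Q}^+$; but $t_{n\alpha}\lambda=\lambda-n(\lambda,\alpha)\delta$ would drive the $\delta$-coefficient unboundedly positive unless $(\lambda,\alpha)=0$, and likewise $(\lambda,\gamma)=0$. Writing $\lambda=-k_1\beta_1-m\alpha-k_2\beta_2-\ell(\delta-\gamma)$ with $k_1,k_2,m,\ell\ge 0$, the two orthogonality conditions force $m=\ell=(k_1+k_2)/2$, so $\lambda=-\ell\delta+j(\beta_2-\beta_1)$ with $j=(k_1-k_2)/2\in\mathbb{Z}$ and $|j|\le\ell$, which corresponds to a term $q^\ell(y_1/y_2)^j$ with $|j|\le\ell$, as required.
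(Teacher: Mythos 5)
Your items (a) and (c) are fine and agree with~\ref{sectsupport}, and your formal invariance computations in (b) are plausible; but the decisive step ``with (a) and (b) in hand, any $\lambda\in\supp(Q)$ has its full $\hat{W}$-orbit inside $-\hat{Q}^+$'' is a genuine gap, and it is exactly the difficulty the paper's proof is built to circumvent. The $\hat{W}$-action on $Q=\hat{R}^{-1}e^{-\rho}\cdot RHS$ is the one defined on the localization $\cR'$ (see~\ref{cR'}): $w$ acts on the inverse of the infinite product by the formula $w(Y^{-1}Y')=(wY)^{-1}(wY')$, not by applying $w$ term-by-term to the expansion. Consequently ``$wQ=Q$ in $\cR'$'' does not imply that $\supp(Q)$ (i.e.\ the set of exponents in the chosen expansion region of~\Rem{remsupp}) is $w$-stable. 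The paper warns about precisely this: in~\ref{compex} the element $(e^{\alpha}-e^{-\alpha})^{-1}$ is $s_\alpha$-anti-invariant in $\cR'$, yet its support $\{-\alpha,-3\alpha,\dots\}$ is not $s_\alpha$-stable, and Remark~\ref{remR} gives a similar failure for $\cF_{W'}$. Support stability follows from (anti-)invariance only for elements of $\cR_{W'}$, and $\hat{R}^{-1}e^{-\rho}$ is not in $\cR_{\hat{W}}$; establishing $Q\in\cR_{\hat W}$ directly is essentially equivalent to the support claim you are trying to prove, so the argument as written is circular at this point. (The same caveat applies to your use of the $\hat{W}_\gamma$-anti-invariance of $\cF_{T_\gamma}(Re^\rho)$: the formal identity is fine, but by itself it gives no control of supports.)

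The paper's proof avoids asserting invariance of $Q$ altogether. It multiplies $Y=RHS/LHS$ by $\hat{R}_0e^{\hat{\rho}_{\alpha}}$, so that $\hat{R}_0e^{\hat{\rho}_{\alpha}}Y=\hat{R}_1e^{\hat{\rho}_{\alpha}-\rho}\cdot RHS$ involves only infinite products with nonnegative exponents and hence lies in $\cR_{\hat{W}_{\alpha}}$, where $\hat{W}_\alpha$-anti-invariance really does force the support to be a union of regular $\hat{W}_\alpha$-orbits (\ref{compex}). It then splits $Y=Y_1+Y_2$ with $Y_1$ the part supported on $\mathbb{Z}\delta+\mathbb{Z}(\beta_1-\beta_2)$, takes a maximal element $\mu$ of $\supp(Y_2)$, and uses maximality plus regularity of the orbit of $\hat{\rho}_\alpha+\mu$ to get $(\mu,\alpha)=0$; repeating with $\hat{\rho}_\gamma$ and \Lem{lemalphagamma} gives $(\mu,\gamma)=0$, a contradiction, so $Y_2=0$. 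Your final orthogonality computation is close in spirit to this last step, but without the passage to $\cR_{\hat{W}_\alpha}$ (or some substitute controlling the support of the localized element) the proof does not go through.
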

\begin{proof}
Recall that $LHS=\hat{R}e^{\rho}$ and that
$\hat{R}\in\cY$ (see~\ref{infprod} for notation).
By~\ref{sectsupport}, $RHS\in\cR$. Therefore the fraction
$$Y:=\frac{RHS}{LHS}=\hat{R}^{-1}e^{-\rho}\cdot RHS$$
lies in $\cR$.

Clearly, $-\rho\in\supp(\hat{R}^{-1}e^{-\rho})\subset
(-\rho-\hat{Q}^+)$. Since 
$\supp (RHS)\subset \rho-\hat{Q}^+$, we conclude that 
$\supp (Y)\subset-\hat{Q}^+$. By~(\ref{Sw}) the coefficient of $e^{\rho}$
in $RHS$ is $1$; clearly, the coefficient of $e^{-\rho}$
in $\hat{R}^{-1}e^{-\rho}$ is also $1$, so the coefficient of $e^0=1$
in $Y$ is $1$. In the light of~\Rem{remsupp}, the required 
assertion is equivalent to the inclusion
\begin{equation}\label{suppY}
\supp(Y)\subset\{-n\delta+j(\beta_1-\beta_2)| n\geq 0, |j|\leq n\}.
\end{equation}

Retain notation of~\ref{R6}.
The element $\hat{\rho}_{\alpha}:=2\Lambda_0+\frac{\alpha}{2}$
is  the standard element for the corresponding copy of $\fsl_2\hat{}\subset 
\fgl(2|2)\hat{}$.
Recall that $\hat{R}=\frac{\hat{R}_0}{\hat{R}_1}$ 
(see~\ref{anform} for notation) so
$\hat{R}_1e^{\hat{\rho}_{\alpha}-\rho}=\hat{R}_0e^{\hat{\rho}_{\alpha}}\cdot
(\hat{R}e^{\rho})^{-1}$.
By~\ref{infprod}, $\hat{R}_1e^{\hat{\rho}_{\alpha}-\rho}$
belongs to $\cR_{\hat{W}}$. It is a standard fact
that $\hat{R}_0e^{\hat{\rho}_{\alpha}}$ is $\hat{W}_{\alpha}$-anti-invariant.
Recall that $\hat{R}e^{\rho}$ is $\hat{W}$-anti-invariant. Thus
 $\hat{R}_1e^{\hat{\rho}_{\alpha}-\rho}$ is a $\hat{W}_{\alpha}$-invariant
element of $\cR_{\hat{W}}$. One has 
$$\hat{R}_0e^{\hat{\rho}_{\alpha}}Y=
\hat{R}_1e^{\hat{\rho}_{\alpha}-\rho}\cdot RHS=
(1+\sum f_n)\cdot
\hat{R}_1e^{\hat{\rho}_{\alpha}-\rho}\cdot\cF_{\hat{W}_{\alpha}}
\bigl(\frac{e^{\rho}}{(1+e^{-\beta_1})
(1+e^{-\beta_2})}\bigr).$$

The $\hat{W}_{\alpha}$-invariance of 
$\hat{R}_1e^{\hat{\rho}_{\alpha}-\rho}$ gives
$$\hat{R}_1e^{\hat{\rho}_{\alpha}-\rho}\cdot\cF_{\hat{W}_{\alpha}}
\bigl(\frac{e^{\rho}}{(1+e^{-\beta_1})
(1+e^{-\beta_2})}\bigr)=\cF_{\hat{W}_{\alpha}}
\bigl(\frac{\hat{R}_1e^{\hat{\rho}_{\alpha}}}{(1+e^{-\beta_1})
(1+e^{-\beta_2})}\bigr)$$
so
$$\hat{R}_0e^{\hat{\rho}_{\alpha}}Y=(1+\sum f_n)\cdot \cF_{\hat{W}_{\alpha}}
\bigl(\frac{\hat{R}_1e^{\hat{\rho}_{\alpha}}}{(1+e^{-\beta_1})
(1+e^{-\beta_2})}\bigr).$$
By~\ref{sectsupport}, $\cF_{\hat{W}_{\alpha}}
\bigl(\frac{e^{\rho}}{(1+e^{-\beta_1})
(1+e^{-\beta_2})}\bigr)$ lies in $\cR$ so
$\cF_{\hat{W}_{\alpha}}
\bigl(\frac{\hat{R}_1e^{\hat{\rho}_{\alpha}}}{(1+e^{-\beta_1})
(1+e^{-\beta_2})}\bigr)$ lies in $\cR$.
By~\ref{infprod}, the term
$$\frac{\hat{R}_1e^{\hat{\rho}_{\alpha}}}{(1+e^{-\beta_1})
(1+e^{-\beta_2})}=e^{\hat{\rho}_{\alpha}}
\prod_{\beta\in\hat{\Delta}_{1,+}\setminus\{\beta_1,\beta_2\}}
(1+e^{-\beta})$$
lies in $\cR_{\hat{W}}$. Therefore,
in the light of~\ref{compex}, $\cF_{\hat{W}_{\alpha}}
\bigl(\frac{\hat{R}_1e^{\hat{\rho}_{\alpha}}}{(1+e^{-\beta_1})
(1+e^{-\beta_2})}\bigr)$ is a $\hat{W}_{\alpha}$-anti-invariant
element of $\cR_{\hat{W}_{\alpha}}$.
Observe that $\frac{y_1}{y_2}=e^{\beta_2-\beta_1}$
is $\hat{W}$-invariant so $f_n$ is $\hat{W}$-invariant.
Thus $(1+\sum f_n)\cF_{\hat{W}_{\alpha}}
\bigl(\frac{\hat{R}_1e^{\hat{\rho}_{\alpha}}}{(1+e^{-\beta_1})
(1+e^{-\beta_2})}\bigr)$ is a $\hat{W}_{\alpha}$-anti-invariant
element of $\cR_{\hat{W}_{\alpha}}$.
As a result, $\hat{R}_0e^{\hat{\rho}_{\alpha}}Y$ is
a $\hat{W}_{\alpha}$-anti-invariant element of $\cR_{\hat{W}_{\alpha}}$.

Write $Y=Y_1+Y_2$, where $Y_1=\sum_{n=0}^{\infty}
\sum_{j=-\infty}^{\infty} a_{n,j} q^n (\frac{y_1}{y_2})^j$
and $Y_2$ does not have monomials of the form $q^n (\frac{y_1}{y_2})^j$,
i.e $\supp(Y)=\supp(Y_1)\coprod\supp(Y_2)$.
One has $Y_i\in\cR$ 
because $\supp(Y_i)\subset \supp (Y)\subset-\hat{Q}^+$ ($i=1,2$).

Since $\frac{y_1}{y_2}=e^{\beta_2-\beta_1}$ is $\hat{W}$-invariant, 
$Y_1$ is a $\hat{W}$-invariant element of $\cR_{\hat{W}}$. Since
$\hat{R}_0e^{\hat{\rho}_{\alpha}}$ is 
a $\hat{W}_{\alpha}$-anti-invariant element of $\cR_{\hat{W}_{\alpha}}$,
the product $\hat{R}_0e^{\hat{\rho}_{\alpha}}Y_1$ is
a $\hat{W}_{\alpha}$-anti-invariant element of $\cR_{\hat{W}_{\alpha}}$.
By above, $\hat{R}_0e^{\hat{\rho}_{\alpha}}Y$ is
a $\hat{W}_{\alpha}$-anti-invariant element of $\cR_{\hat{W}_{\alpha}}$.
Hence $\hat{R}_0e^{\hat{\rho}_{\alpha}}Y_2$ is
a $\hat{W}_{\alpha}$-anti-invariant element of $\cR_{\hat{W}_{\alpha}}$.

Assume that $Y_2\not=0$. Recall that $\supp (Y_2)\subset-\hat{Q}^+$.
Let $\mu$ be a maximal element in $\supp (Y_2)$
with respect to the standard partial order 
$\mu\leq\nu$ if $(\nu-\mu)\in \hat{Q}^+$.
Then $\hat{\rho}_{\alpha}+\mu$ is a maximal element in
the support of $\hat{R}_0e^{\hat{\rho}_{\alpha}}Y_2$.
By~\ref{compex}, this support is the union of $\hat{W}_{\alpha}$-regular
orbits, so  $\hat{\rho}_{\alpha}+\mu$ is a maximal element
in a regular $\hat{W}_{\alpha}$-orbit (regularity means
that each element has the trivial stabilizer in $\hat{W}_{\alpha}$).
Since $\mu\in-\hat{Q}^+$ one has
$\frac{2(\mu,\alpha)}{(\alpha,\alpha)}\in\mathbb{Z}$.
Therefore
$\frac{2(\hat{\rho}_{\alpha}+\mu,\alpha)}{(\alpha,\alpha)}=1+(\mu,\alpha), \ 
\frac{2(\hat{\rho}_{\alpha}+\mu,\delta-\alpha)}
{(\delta-\alpha,\delta-\alpha)}=1+(\mu,\delta-\alpha)\ $
are positive integers so $(\mu,\alpha),\ (\mu,\delta-\alpha)\geq 0$.
Since  $\mu\in-\hat{Q}^+$ one has $(\mu,\delta)=0$ and thus
$(\mu,\alpha)=0$. 

The element $\hat{\rho}_{\gamma}:=-2\Lambda_0+\frac{\gamma}{2}$
is  the standard element for the corresponding copy of $\fsl_2\hat{}$.
Using~\Lem{lemalphagamma} we obtain
$$\hat{R}_0e^{\hat{\rho}_{\gamma}}Y=(1+\sum f_n)\cF_{\hat{W}_{\gamma}}
\bigl(\frac{\hat{R}_1e^{\hat{\rho}_{\gamma}}}{(1+e^{-\beta_1})
(1+e^{-\beta_2})}\bigr).$$
Repeating  the above  reasoning for $\hat{W}_{\gamma}$ we obtain
$(\mu,\gamma)=0$. Hence $(\mu,\alpha)=(\mu,\gamma)=0$ and
$\mu\in-\hat{Q}^+$. This implies $\mu=-m\delta+k(\beta_1-\beta_2)$,
which contradicts to the construction of $Y_2$.
Hence $Y_2=0$ so $Y=Y_1$
that is $\supp(Y)\subset \{-n\delta+j(\beta_1-\beta_2)\}$.
Combining the condition $\supp(Y)\subset-\hat{Q}^+$
and~(\ref{suppy1y2}), we obtain the required inclusion~(\ref{suppY}).
\end{proof}

\subsection{Evaluation}
\label{modpr}
By~\Lem{ai}, $\frac{RHS}{LHS}$ is a function of one variable
$y:=\frac{y_1}{y_2}$. In order to establish the identity $LHS=RHS$,
it is enough to verify that $\frac{RHS}{LHS}(y)=1$ for a fixed
$x$ and some $y_2, y_1$ satisfying $y_1=yy_2$. 
We will check this for $x=-1, y_2=y, y_1=y^2$, 
(i.e. $e^{-\alpha}=-1, e^{-\beta_1}=y^2, e^{-\beta_2}=y$).

One has $\frac{RHS}{LHS}=\hat{R}^{-1} (RHS\cdot e^{-\rho})$.
We write $RHS\cdot e^{-\rho}=AB$, where
$$
A:=\frac{((1-q)_q^{\infty})^2}{(1-qy)_q^{\infty}
(1-qy^{-1})_q^{\infty}},\ \ \ 
B:=e^{-\rho}\cdot\cF_{\hat{W}_{\alpha}}
\bigl(\frac{e^{\rho}}{(1+e^{-\beta_1})
(1+e^{-\beta_2})}\bigr),\ \ y=\frac{y_1}{y_2},
$$

\subsubsection{}
Recall that an infinite product $\prod_{i=1}^{\infty} (1+g_i(z))$, 
where $g_i(z)$ are holomorphic functions in $U\subset \mathbb{C}$ 
is called {\em normally convergent in $U$}
if $\sum g_i(z)$ normally  converges in $U$. By~\cite{R}, a
normally convergent infinite product converges to a 
function $g(z)$, which is holomorphic in $U$; moreover,
the set of zeros of $g(z)$ is the union of the sets of zeros of $1+f_i(z)$
and the order of each zero is the sum of the orders of the corresponding
zeros of $1+g_i(z)$.

The denominator of $A(y)$ normally  converges in any 
$U\subset X$, where $X\subset \mathbb{C}$ is a compact not containing
$0$. Thus $A(y)$ is a meromorphic function in the region $0<|y|$
with simple poles at the points $y=q^n$, $n\in\mathbb{Z}\setminus\{0\}$.

\subsubsection{}
The evaluation of $\hat{R}$ takes the form
$$\hat{R}(y)=\frac{2\prod_{n=1}^{\infty}
(1-q^n)^4(1+q^n)^2(1+q^{n-1}y^3)(1+q^{n}y^{-3})}
{\prod_{n=0}^{\infty}\prod_{s=1}^2(1+q^ny^{s})(1+q^{n+1}y^{-s})
(1-q^ny^{s})(1-q^{n+1}y^{-s})}.$$
All infinite product in the above expression normally  converge in any 
$U\subset X$, where $X\subset \mathbb{C}$ is a compact not containing
$0$. Therefore $\hat{R}(y)$ is a meromorphic function 
in the region $0<|y|$, and
\begin{equation}\label{LR}
\frac{A}{\hat{R}}(y)=\frac{(1-y)
\prod_{n=0}^{\infty}(1-q^{n}y^2)(1-q^{n+1}y^{-2})
\prod_{s=1}^2(1+q^ny^{s})(1+q^{n+1}y^{-s})}
{2\prod_{n=0}^{\infty}(1-q^{2n})^2(1+q^{n}y^3)
(1+q^{n+1}y^{-3})}
\end{equation}
is a meromorphic function in the region $0<|y|$ with simple poles,
the zero  of order two at $y=1$  and 
all other zeros of order one; the set of poles (resp., zeros)
is $P$ (resp., $Z$):
$$P:=\{y|\ y^3=-q^m\ \& \ y\not=-q^k\}_{k,m\in\mathbb{Z}},\ \ \ \
Z:=\{y|\ y^2=\pm q^m\}_{m\in\mathbb{Z}}.$$

One readily sees from~(\ref{LR}) that
\begin{equation}\label{ResA}
\lim_{y\to 1} (y-1)^{-2}\frac{A}{\hat{R}}(y)=2, \ \ \ \ \ \ 
\frac{A}{\hat{R}}(qy)=\frac{A}{\hat{R}}(y)\cdot\frac{q(1-qy)}{1-y}.
\end{equation}

\subsubsection{}
Recall that
$$B=\sum_{n=-\infty}^{\infty}\bigl(\frac{q^n}{(1+q^ne^{-\beta_1})
(1+q^ne^{-\beta_2})}-\frac{q^ne^{-\alpha}}{(1+q^ne^{-\beta_1-\alpha})
(1+q^ne^{-\beta_2-\alpha})}\bigr)$$
so the evaluation takes the form
\begin{equation}\label{B}\begin{array}{ll}
B(y)&=\sum_{n=-\infty}^{\infty}\bigl(\frac{q^n}{(1+q^ny)
(1+q^ny^2)}+\frac{q^n}{(1-q^ny)(1-q^ny^2)}\bigr)\\
&=\frac{1}{1-y}
\sum_{n=-\infty}^{\infty}\bigl(\frac{q^n}{1+q^ny}-\frac{q^ny}
{1+q^ny^2}+\frac{q^n}{1-q^ny}-\frac{q^ny}{1-q^ny^2}\bigr).
\end{array}
\end{equation}

Each point $y\in\mathbb{C}$  such that $y^2\not=\pm q^n$ 
for $n\in\mathbb{Z}$ has a neighborhood $U$ such that the above
sums converge absolutely and uniformly. 
Thus $B(y)$ is a meromorphic function in the region $0<|y|$ with  poles
at the points $\{y|\ y^2=\pm q^n\}_{n\in\mathbb{Z}}$, where 
all poles are simple except the pole of order two at $y=1$.
Let us verify that $B(y)=0$ for each $y\in P$. For $y^3=-q^k$,
$y\not\in\{-q^m\}$ one has
$$\frac{y}{1\pm q^ny^2}=\frac{y}{1\mp q^{n+k}y^{-1}}=\mp 
\frac{1}{1\mp q^{-n-k}y}$$
so $B(y)=0$. Hence $\frac{AB}{\hat{R}}(y)$
is a holomorphic function in the region $0<|y|$.

From the second formula of~(\ref{B})
one sees that $B(qy)=q^{-1}\frac{1-y}{1-qy}$; combining
with~(\ref{ResA}) we get $\frac{AB}{\hat{R}}(qy)=\frac{AB}{\hat{R}}(y)$.
Since $\frac{AB}{\hat{R}}(y)$ is a holomorphic function in the region $0<|y|$, 
this function is constant. One has
$$\lim_{y\to 1} (1-y)^2\cdot B(y)=
\lim_{y\to 1}(1-y)^2\frac{1}{(1-y)(1-y^2)}=\frac{1}{2}.$$
Using~(\ref{ResA}) we obtain $\frac{AB}{\hat{R}}(1)=1$
so $\frac{AB}{\hat{R}}(y)\equiv 1$ (for $0<|y|$). This completes
the proof of denominator identity.

\section{Application to Jacobi identity~(\ref{jacobi})}
Recall the Gauss' identity (which follows easily 
from the Jacobi triple product) 
$$\square(-q)=\frac{(1-q)_q^{\infty}}{(1+q)_q^{\infty}}.$$

The evaluation of the identity~(\ref{denom}) at $y_1=y_2=1$ gives
$$\begin{array}{l}
\displaystyle\frac{\bigl((1-x)_q^{\infty}(1-qx^{-1})_q^{\infty}\bigr)^2
\bigl((1-q)_q^{\infty}\bigr)^4}
{\displaystyle 4 ((1+q)_q^{\infty})^4
\bigl((1+x)_q^{\infty}
(1+qx^{-1})_q^{\infty}\bigr)^2}=
\displaystyle\sum_{n=-\infty}^{\infty} a_n,\\
\text{ where }
a_n:=\frac{q^n}
{(1+q^n)(1+q^n)}-\frac{q^nx}
{(1+q^nx)(1+q^nx)}.
\end{array}
$$
We divide both sides of the above identity
 by $\frac{(1-x)^2}{16}$ and take the limit $x\mapsto 1$; we get
$$\bigl(\frac{(1-q)_q^{\infty}}{(1+q)_q^{\infty}}\bigr)^8=1-16
\displaystyle\sum_{n=1}^{\infty}
\frac{q^n(q^{2n}-4q^n+1)}{(1+q^n)^4},$$
since
$$\lim_{x\to 1}\frac{a_0}{(x-1)^2}=\frac{1}{16},\ \ 
\lim_{x\to 1}\frac{a_n+a_{-n}}{(x-1)^2}=-
\frac{q^n(q^{2n}-4q^n+1)}{(1+q^n)^4}.$$

Using the expansion
$(a+1)^{-4}=\sum_{j=0}^{\infty} (-1)^j\frac{(j+1)(j+2)(j+3)}{6} a^j$, 
we obtain
$$\begin{array}{ll}
\square(-q)^8=\bigl(\frac{(1-q)_q^{\infty}}{(1+q)_q^{\infty}}\bigr)^8&=1+16
\displaystyle\sum_{n=1}^{\infty}\sum_{j=1}^{\infty}(-1)^j
j^3 q^{nj},\end{array}$$
which implies the required identity
$$\square(q)^8=1+16\displaystyle\sum_{n=1}^{\infty}
\sum_{j=1}^{\infty}(-1)^{j+nj}j^3 q^{nj}.$$


\end{document}